\newtheorem{thm}{Theorem}[section]
\newtheorem{lem}[thm]{Lemma}
\newtheorem{defi}[thm]{Definition}
\newcommand\BH{\mathrm{BH}}
\newcommand\Q{{\mathbb{Q}}}
\newcommand{\calM}{\mathcal{M}}
\newcommand{\calH}{\mathcal{H}}
\title{Homomorphisms of matrix algebras and constructions of Butson-Hadamard matrices}
\author{Padraig \'O Cath\'ain}
\address{Department of Mathematical Sciences, Worcester Polytechnic Institute, 100 Institute Road
Worcester, MA, 01609-2280, USA}
\email{pocathain@wpi.edu}
\author{Eric Swartz}
\address{Department of Mathematics, College of William \& Mary, P.O. Box 8795, Williamsburg, VA 23187-8795, USA}
\email{easwartz@wm.edu}
\begin{document}

\begin{abstract}
An $n \times n$ matrix $H$ is Butson-Hadamard if its entries are $k^{\text{th}}$ roots of unity and it satisfies $HH^* = nI_n$. Write $\BH(n, k)$ for the set of such matrices.

Suppose that $k = p^{\alpha}q^{\beta}$ where $p$ and $q$ are primes and $\alpha \geq 1$. A recent result of {\"O}sterg{\aa}rd and Paavola uses a matrix $H \in \BH(n,pk)$ to construct $H' \in \BH(pn, k)$. We simplify the proof of this result and remove the restriction on the number of prime divisors of $k$. More precisely, we prove that if $k = mt$, and each prime divisor of $k$ divides $t$, then we can construct a matrix $H' \in \BH(mn, t)$ from any $H \in \BH(n,k)$.
\end{abstract}

\maketitle

\section{Introduction}

A \textit{Butson-Hadamard matrix} $H \in \BH(n,k)$ is a matrix whose entries are all complex $k^\text{th}$ roots of unity satisfying $HH^* = nI_n$, where $H^*$ denotes the conjugate transpose of $H$ and $I_n$ is the $n \times n$ identity matrix.  It follows from the definition that distinct rows of a Butson-Hadamard matrix are orthogonal.  Hadamard proved that a matrix $M$ with real entries of modulus bounded by $1$ has determinant at most $n^{n/2}$ and that this bound is met if and only if $M$ is a \textit{real Hadamard matrix}, i.e., if and only if $M \in \BH(n,2)$ \cite{Hadamard1893}.   Butson-Hadamard matrices are so-named for their appearance in \cite{Butson}, where Butson constructed matrices in $\BH(2p, p)$.

A major open question in the theory of Butson-Hadamard matrices is for which pairs of integers $(n, k)$ the set $\BH(n,k)$ is nonempty. Recently a number of authors have used matrices in $\BH(n,k)$ to construct matrices in $\BH(mn,t)$ for various values of $(m,n,t)$. The first result of this type is due to Cohn, who proved that the existence of a matrix in $\BH(n,4)$ implies that $\BH(2n, 2)$ is nonempty \cite{Cohn1965}. More recently, Compton, Craigen and de Launey proved that a matrix in $\BH(n, 6)$ with no entries in $\{1, -1\}$ implies that $\BH(4n,2)$ is nonempty \cite{CCdeL}.  The first author and Egan unified and generalised these results, giving sufficient conditions for the existence of a matrix in $\BH(n,k)$ and a matrix in $M \in \BH(m, \ell)$ to imply the existence of a matrix in $\BH(mn, \ell)$ \cite{mypaper-morphisms}. The most substantial conditions in these constructions are on the spectrum of the matrix $M$. In \cite{mypaper-spectra}, the authors of this paper and Egan proved the existence of a real Hadamard matrix with minimal polynomial $\Phi_{2^{t+1}}(x)$, which implies that, whenever there exists $H \in \BH(n, 2^t)$, there exists a real Hadamard matrix of order $2^{2^{t-1} - 1}n$.

Perhaps the strongest of these recent results is due to {\"O}sterg{\aa}rd and Paavola \cite{OstergardButson}, who prove that a matrix in $\BH(n, pk)$ can be used to construct a matrix in $\BH(pn, k)$, provided that $k = p^{\alpha}q^{\beta}$ for primes $p$ and $q$ with $\alpha \geq 1$. In this note, we will generalise this result, removing the restriction on the number of primes dividing $k$, while also simplifying the proof.

\begin{thm}
 \label{thm:intermediate}
 Suppose that $H \in \BH(n,k)$, and $p$ is a prime such that $p^2 \mid k$. Then there exists $H' \in \BH(np, k/p)$.
\end{thm}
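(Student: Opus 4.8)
The plan is to embed $\mathbb{Z}[\zeta_k]$ into $M_p(\mathbb{Z}[\zeta_{k/p}])$ as a $*$-subring, use the induced entrywise map $M_n(\mathbb{Z}[\zeta_k])\to M_{np}(\mathbb{Z}[\zeta_{k/p}])$ to turn $H$ into a matrix $\widetilde H$ satisfying $\widetilde H\widetilde H^{*}=nI_{np}$, and then repair the norm — and, crucially, remove the many zero entries that $\widetilde H$ carries — by multiplying by $I_n\otimes F$ for a fixed $F\in\BH(p,k/p)$. Write $k'=k/p$; since $p^{2}\mid k$ we have $p\mid k'$, so we may fix primitive roots with $\zeta_k^{p}=\zeta_{k'}$, and $[\mathbb{Q}(\zeta_k):\mathbb{Q}(\zeta_{k'})]=p$. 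Hence the minimal polynomial of $\zeta_k$ over $\mathbb{Q}(\zeta_{k'})$ has degree $p$, and as $\zeta_k$ is a root of the monic degree-$p$ polynomial $x^{p}-\zeta_{k'}$, this \emph{is} its minimal polynomial; so $\mathbb{Z}[\zeta_k]\cong\mathbb{Z}[\zeta_{k'}][x]/(x^{p}-\zeta_{k'})$, and I would let $\rho\colon\mathbb{Z}[\zeta_k]\to M_p(\mathbb{Z}[\zeta_{k'}])$ be the left regular representation of the ring $\mathbb{Z}[\zeta_k]$ with respect to the $\mathbb{Z}[\zeta_{k'}]$-basis $1,\zeta_k,\dots,\zeta_k^{p-1}$. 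This is a ring homomorphism, and $M:=\rho(\zeta_k)$ is the companion matrix of $x^{p}-\zeta_{k'}$.

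I would first record the two properties of $M$ that make the construction run. First, $M$ is \emph{monomial} — one nonzero entry in each row and column — with every nonzero entry in $\mu_{k'}$, the group of $k'$-th roots of unity; hence each $\rho(\zeta_k^{j})=M^{j}$ is again monomial with entries in $\mu_{k'}$. Second, a monomial matrix with unimodular entries is unitary, so $M^{*}=M^{-1}=\rho(\zeta_k^{-1})=\rho(\overline{\zeta_k})$; since $\rho$ is multiplicative this gives $\rho(\overline{\zeta_k^{j}})=\rho(\zeta_k^{j})^{*}$ for all $j$, and hence $\rho(\bar z)=\rho(z)^{*}$ for all $z\in\mathbb{Z}[\zeta_k]$ by additivity. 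Thus $\rho$ is a $*$-homomorphism (complex conjugation on $\mathbb{Z}[\zeta_k]$, conjugate transpose on $M_p(\mathbb{Z}[\zeta_{k'}])$), and so is its entrywise extension $M_n(\rho)\colon M_n(\mathbb{Z}[\zeta_k])\to M_{np}(\mathbb{Z}[\zeta_{k'}])$.

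Now put $\widetilde H:=M_n(\rho)(H)$, i.e.\ the $n\times n$ array of $p\times p$ blocks whose $(r,t)$ block is $\rho(H_{rt})$. Since $M_n(\rho)$ is a $*$-homomorphism with $\rho(1)=I_p$, we get $\widetilde H\widetilde H^{*}=M_n(\rho)(HH^{*})=M_n(\rho)(nI_n)=nI_{np}$. But $\widetilde H$ is not Butson-Hadamard: writing $H_{rt}=\zeta_k^{j_{rt}}$, each block $\rho(H_{rt})=M^{j_{rt}}$ is monomial, so only a $1/p$ fraction of its entries is nonzero. To fix this, use $p\mid k'$: the $p\times p$ Fourier matrix $F$ with $(a,b)$ entry $\zeta_p^{ab}$ ($0\le a,b\le p-1$) lies in $\BH(p,k')$, so $FF^{*}=pI_p$ and all entries of $F$ lie in $\mu_{k'}$. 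Set $H':=(I_n\otimes F)\,\widetilde H$. Its $(r,t)$ block is $F\,M^{j_{rt}}$, which is $F$ with its columns permuted and rescaled by elements of $\mu_{k'}$; so $H'$ is an $np\times np$ matrix, all of whose entries lie in $\mu_{k'}$.

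Finally,
\[
H'(H')^{*}=(I_n\otimes F)\,\widetilde H\widetilde H^{*}\,(I_n\otimes F^{*})=(I_n\otimes F)(nI_{np})(I_n\otimes F^{*})=n\,(I_n\otimes FF^{*})=np\,I_{np},
\]
so $H'\in\BH(np,k/p)$, as required. The hypothesis $p^{2}\mid k$ enters in exactly two places: it forces $x^{p}-\zeta_{k'}$ to be the minimal polynomial of $\zeta_k$ over $\mathbb{Q}(\zeta_{k'})$ (so that $\rho$ and the monomial unitary $M$ exist), and it gives $p\mid k/p$ (so that the correcting matrix $F\in\BH(p,k/p)$ exists). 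I do not expect a single hard step here: the content is the observation that the regular representation of $\mathbb{Z}[\zeta_k]$ over $\mathbb{Z}[\zeta_{k'}]$ is a $*$-homomorphism into monomial matrices, and that a genuine, zero-free Butson-Hadamard matrix is obtained from the zero-laden $\widetilde H$ by tensoring the base matrix $F\in\BH(p,k/p)$ into it; the only calculation to watch is the identification $M^{*}=M^{-1}$, which is immediate from $M$ being monomial and unimodular.
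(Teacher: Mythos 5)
Your proof is correct and is essentially the paper's argument: embed $\zeta_k$ as the companion matrix of $x^p-\zeta_{k/p}$, extend entrywise to a $*$-homomorphism of matrix algebras, and multiply $H^{\psi}$ by $I_n\otimes F_p$ to fill in the zeros and fix the Gram matrix. Two minor refinements worth keeping: you deduce irreducibility of $x^p-\zeta_{k/p}$ directly from the degree count $[\mathbb{Q}(\zeta_k):\mathbb{Q}(\zeta_{k/p})]=p$ rather than citing Lang's criterion, and you explicitly justify $\rho(\bar z)=\rho(z)^{*}$ via unitarity of the monomial matrix $M$, a step the paper's Lemma 2.3 uses implicitly.
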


This result generalizes easily to the following.

\begin{thm}
\label{thm:main}
Let $k = mt$, where each prime divisor of $k$ also divides $t$.
If there exists a matrix in $\BH(n,k)$, then there exists a matrix in $\BH(mn, t)$.
\end{thm}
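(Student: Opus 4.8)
The plan is to derive Theorem~\ref{thm:main} from Theorem~\ref{thm:intermediate} by a straightforward induction on the number of prime-power factors we need to strip out of $k$. Write $m = p_1^{a_1} \cdots p_r^{a_r}$, where each $p_i$ is a prime dividing $t$ (this is exactly the hypothesis that every prime divisor of $k = mt$ divides $t$, since the primes dividing $m$ are among those dividing $k$, hence among those dividing $t$). The goal is to remove the factor $m$ from the root-of-unity exponent one prime at a time, at the cost of multiplying the matrix size by the corresponding prime each time.

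The key observation is that Theorem~\ref{thm:intermediate} applies whenever we have $H \in \BH(N, K)$ and a prime $p$ with $p^2 \mid K$, producing a matrix in $\BH(pN, K/p)$. So I would argue as follows. Suppose $H \in \BH(n, k)$ with $k = mt$ as above. Pick any prime $p$ dividing $m$; then $p \mid t$ as well, so $p^2 \mid mt = k$. Applying Theorem~\ref{thm:intermediate} gives a matrix in $\BH(pn, k/p)$, and $k/p = (m/p)t$ still has the property that every prime divisor divides $t$ (removing one factor of $p$ from $m$ does not introduce new primes, and every prime of $m/p$ divides $m$, hence $t$). After performing this step once for each of the $\sum_i a_i$ prime factors (with multiplicity) of $m$, in any order, we have removed the entire factor $m$ from the exponent and multiplied the order by $\prod_i p_i^{a_i} = m$, arriving at a matrix in $\BH(mn, t)$.

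To make the induction clean, I would phrase it as an induction on $m$ (or equivalently on $\Omega(m)$, the number of prime factors of $m$ counted with multiplicity): the base case $m = 1$ is trivial since $\BH(n, k) = \BH(n, t)$, and the inductive step is the single application of Theorem~\ref{thm:intermediate} described above, followed by the inductive hypothesis applied to the smaller factorization $k/p = (m/p) \cdot t$. The only point requiring a line of justification is that the hypothesis on prime divisors is preserved under the operation $m \mapsto m/p$, $t \mapsto t$, which is immediate.

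I do not anticipate any real obstacle here; essentially all the mathematical content is in Theorem~\ref{thm:intermediate}, and Theorem~\ref{thm:main} is a packaging statement. The only thing to be careful about is the bookkeeping of which primes are allowed at each stage — one must check that $p^2 \mid k/p$ fails exactly when $p \nmid m/p$, i.e., that the process terminates precisely when the residual exponent is $t$ — but this is automatic from the factorization $k/p = (m/p)t$ and $p \mid t$. So the proof will be short: set up the induction, invoke Theorem~\ref{thm:intermediate} once, and close.
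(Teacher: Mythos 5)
Your proposal is correct and matches the paper's proof: both peel off the prime factors of $m$ one at a time by repeated application of Theorem~\ref{thm:intermediate}, with the same key observation that $p \mid t$ guarantees $p^2$ divides the exponent at every intermediate stage. The paper phrases this as an iterated application producing matrices in $\BH(n \prod_{i=1}^{c} p_{i}, t \prod_{i=c+1}^{d} p_{i})$ rather than as a formal induction on $\Omega(m)$, but the content is identical.
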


This paper is organized as follows.  Section \ref{sect:main} is dedicated to the proof of Theorem \ref{thm:intermediate}, from which Theorem \ref{thm:main} essentially follows immediately.  We conclude with a short discussion of obstructions to generalising Theorem \ref{thm:main} further by similar techniques.

\section{Proof of the main result}
\label{sect:main}

Throughout this section, we will use the following conventions: $k$ is a fixed positive integer and $p$ is a prime such that $p^2$ divides $k$. For any positive integer $t$, we write $\zeta_{t}$ for a primitive $t^{\textrm{th}}$ root of unity.

Given a field $K$, we say that $a \in K$ is a $p^{\textrm{th}}$ power (in $K$) if there exists $x \in K$ such that $x^{p} = a$, and we write $K^{p}$ for the set of $p^{\textrm{th}}$ powers in $K$.
We will require the following sufficient condition for the polynomial $x^n - a$ to be irreducible over $K$.

\begin{lem}\cite[Chapter VI, Theorem 9.1]{Lang}
\label{lem:Lang}
 Let $K$ be a field, and let $n \geq 2$ an integer with prime divisors $p_{1}, p_{2}, \ldots, p_{d}$. Let $a \in K / \cup_{i=1}^{d} K^{p_{i}}$. If $4 \mid n$ we also require that $a \notin -4K^4$.
 Then $x^n - a$ is irreducible in $K[x]$.
\end{lem}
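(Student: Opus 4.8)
The plan is to establish the criterion by reducing a general exponent $n$ to the case of a prime power and then treating odd primes and the prime $2$ separately, the latter being exactly where the exceptional hypothesis $a \notin -4K^4$ is forced. Throughout I fix a root $\alpha$ of $x^n-a$ in a fixed algebraic closure; since $x^n-a$ has degree $n$ and $\alpha$ generates it over $K$, irreducibility of $x^n-a$ is equivalent to the single statement $[K(\alpha):K]=n$.

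First I would prove the reduction to prime powers. Writing $n=\prod_{i=1}^{d}p_i^{e_i}$ and $m_i=n/p_i^{e_i}$, the element $\alpha^{m_i}$ is a root of $x^{p_i^{e_i}}-a$ and lies in $K(\alpha)$. If each $x^{p_i^{e_i}}-a$ is irreducible over $K$, then $p_i^{e_i}=[K(\alpha^{m_i}):K]$ divides $[K(\alpha):K]$; since the $p_i^{e_i}$ are pairwise coprime, their product $n$ divides $[K(\alpha):K]\le n$, forcing equality. Thus it suffices to treat the prime-power case, and the hypotheses $a\notin K^{p_i}$ for all $i$, together with $a\notin -4K^4$ when $4\mid n$, are precisely the per-prime-power conditions I will use. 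For the base case $x^p-a$ with $a\notin K^p$: if it factored as $fg$ with $f$ monic of degree $d$, $0<d<p$, the constant term of $f$ equals $\pm\zeta_p^{\,s}\alpha^d\in K$ for some $s$, whose $p$-th power is $\pm a^d$; combining this with a B\'ezout relation $ud+vp=1$ (and using $-1=(-1)^p\in K^p$ for odd $p$) yields $a\in K^p$, a contradiction.

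For odd prime powers I would induct on $e$. With $\beta=\alpha^{p^{e-1}}$ a root of the irreducible $x^p-a$, so $[K(\beta):K]=p$, it remains to show $x^{p^{e-1}}-\beta$ is irreducible over $L:=K(\beta)$, that is, $\beta\notin L^p$. Here the field norm does the work: the minimal polynomial of $\beta$ over $K$ is $x^p-a$, so $N_{L/K}(\beta)=(-1)^{p+1}a=a$ for odd $p$; hence $\beta=\gamma^p$ would give $a=N_{L/K}(\gamma)^p\in K^p$, a contradiction, and the induction closes with no auxiliary condition needed.

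The main obstacle is the prime $2$, where the same computation gives $N_{L/K}(\beta)=-a$ and no contradiction follows directly; this is exactly where $-4K^4$ must enter. The key point is that the strengthened hypothesis ``$a\notin K^2$ and $a\notin -4K^4$'' is preserved on passing to $L=K(\sqrt a)$, applied to $\beta=\sqrt a$. Writing a putative square root of $\pm\sqrt a$ as $s+t\sqrt a$ with $s,t\in K$ and comparing the $1$- and $\sqrt a$-components reduces, in both relevant cases, to the equation $4s^4+a=0$; consequently $\sqrt a\in L^2$ forces $a\in -4K^4$, and $\sqrt a\in -4L^4$ (via $\mu=2\delta^2$, giving $-\sqrt a=\mu^2\in L^2$) likewise forces $a\in -4K^4$. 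Under our hypotheses both are excluded, so $\sqrt a\notin L^2$ and $\sqrt a\notin -4L^4$, the inductive hypothesis applies over $L$ with exponent $2^{e-1}$, and the tower does not collapse, giving $[K(\alpha):K]=2^e$. Finally I would note that when $\mathrm{char}(K)$ divides $n$ the relevant factor $x^{p^e}-a$ is purely inseparable and the condition $a\notin K^p$ reduces to the standard inseparable case, so the argument is uniform across all characteristics.
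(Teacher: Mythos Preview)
The paper does not supply its own proof of this lemma; it is quoted as a black box from Lang's \emph{Algebra}. Your argument is correct and is essentially the standard proof given there: reduction to prime-power exponents via coprimality of degrees, the norm trick $N_{L/K}(\beta)=a$ for odd $p$, and the explicit coordinate computation showing that the pair of conditions $a\notin K^{2}$, $a\notin -4K^{4}$ is inherited by $\sqrt{a}$ over $K(\sqrt{a})$, which is exactly where the exceptional hypothesis enters.
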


Let $t = k/p$. The following $p \times p$ matrix features prominently in our main theorem, so we record its definition below.
\[M_{k,p} = \left[\begin{matrix}
 0 & 0 & \cdots & 0 & \zeta_t\\
 1 & 0 & \cdots & 0 & 0\\
 0 & 1 & \cdots & 0 & 0\\
   &   & \ddots &   &  \\
 0 & 0 & \cdots & 1 & 0
\end{matrix}\right].\]

\begin{lem}
\label{lem:psiisom}
Let $p$ be a prime divisor of the positive integer $k$, and let $\zeta_k$ be a primitive $k^{\text{th}}$ root of unity.
Write $t = k/p$ and set $\zeta_{t}$ to be a primitive $t^{\textrm{th}}$ root of unity. Then
\[\psi_{k,p}: \zeta_k^i \mapsto M_{k,p}^i,\]
defines an isomorphism of $\mathbb{Q}$-algebras, $\psi_{k,p}: \mathbb{Q}[\zeta_{k}] \rightarrow \mathbb{Q}[M_{k,p}]$.
\end{lem}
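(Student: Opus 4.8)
The plan is to realise $\mathbb Q[M_{k,p}]$ as a simple extension of the cyclotomic field $K:=\mathbb Q(\zeta_t)$ by a root of $x^p-\zeta_t$, and then to recognise $\mathbb Q(\zeta_k)$ as the same extension; the isomorphism $\psi_{k,p}$ then drops out. Write $M:=M_{k,p}$ and recall the standing hypothesis $p^2\mid k$, so that $p\mid t$; this divisibility is what everything rests on. The one explicit computation is that $M$ carries the standard basis vector $e_j$ to $e_{j+1}$ for $1\le j\le p-1$ and $e_p$ to $\zeta_t e_1$, whence $M^p=\zeta_t I_p$ (and so $M^k=(\zeta_t I_p)^t=I_p$). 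In particular $\mathbb Q[M]$ contains the scalar matrix $\zeta_t I_p$; identifying $\mathbb Q[\zeta_t I_p]$ with $K$, the commutative ring $\mathbb Q[M]$ is a $K$-algebra generated by $M$, and $M$ is a root of $f(x):=x^p-\zeta_t\in K[x]$.

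The crux is the irreducibility of $f$ over $K$, which I would obtain from Lemma~\ref{lem:Lang} applied with $n=p$: as $p$ is prime its only prime divisor is $p$ and the clause ``$4\mid n$'' is vacuous, so it suffices to check $\zeta_t\notin K^p$. If $\beta\in K$ had $\beta^p=\zeta_t$ then $\beta^{pt}=\zeta_t^{\,t}=1$, so $\beta$ is a root of unity of some order $d\mid pt=k$; since $\beta^p=\zeta_t$ has order $t$ we get $d/\gcd(d,p)=t$, and because $p\mid t$ this forces $p\mid d$, hence $d=pt=k$. But $K=\mathbb Q(\zeta_t)$ contains no primitive $k$-th root of unity: otherwise $\mathbb Q(\zeta_k)\subseteq K$ and $\varphi(k)\le\varphi(t)$, contradicting $\varphi(k)=\varphi(pt)=p\,\varphi(t)$ (using $p\mid t$ again). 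So $\zeta_t\notin K^p$ and $f$ is irreducible over $K$.

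Given irreducibility, $f$ is the minimal polynomial of $M$ over $K$, so $\mathbb Q[M]=K[M]\cong K[x]/(f)$ is a field. Taking $\zeta_t:=\zeta_k^{\,p}$ (a primitive $t$-th root of unity, since $p\mid k$) makes $\zeta_k$ a root of $f$, so $f$ is also the minimal polynomial of $\zeta_k$ over $K$ and $\mathbb Q(\zeta_k)=K(\zeta_k)\cong K[x]/(f)$. Composing these two $K$-algebra isomorphisms gives a $\mathbb Q$-algebra isomorphism $\mathbb Q(\zeta_k)\to\mathbb Q[M]$ that sends $\zeta_k\mapsto M$, hence $\zeta_k^i\mapsto M^i$; this is exactly $\psi_{k,p}$, which is therefore a $\mathbb Q$-algebra isomorphism.

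A more economical finish bypasses the degree count: the order computation above shows that every root of $f$ in $\mathbb C$ is a primitive $k$-th root of unity, and $f$ is separable (it is coprime to $f'=px^{p-1}$ since $\zeta_t\neq0$), so $f\mid\Phi_k$ in $\mathbb C[x]$; as $f(M)=0$ this gives $\Phi_k(M)=0$. Recalling that $\Phi_k$ is the minimal polynomial of $\zeta_k$ over $\mathbb Q$, evaluation at $M$ then descends to a $\mathbb Q$-algebra homomorphism $\mathbb Q[\zeta_k]=\mathbb Q[x]/(\Phi_k)\to\mathbb Q[M]$ with $\zeta_k^i\mapsto M^i$, which is injective since its domain is a field and surjective by construction. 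Either way, I expect the one real obstacle to be the number-theoretic fact that $\zeta_t$ is not a $p$-th power in $\mathbb Q(\zeta_t)$ --- equivalently, that the $p$-th roots of $\zeta_t$ have order exactly $k$ --- since this is precisely where the hypothesis $p^2\mid k$ is indispensable; it fails when $p^2\nmid k$, where $\mathbb Q[M_{k,p}]$ need not even be a field.
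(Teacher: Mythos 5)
Your proof is correct and follows essentially the same route as the paper's: identify both $\mathbb{Q}[\zeta_k]$ and $\mathbb{Q}[M_{k,p}]$ with $\mathbb{Q}(\zeta_t)[x]/\langle x^p-\zeta_t\rangle$, with irreducibility supplied by Lemma~\ref{lem:Lang}. The only real difference is that you explicitly verify the hypothesis $\zeta_t\notin\mathbb{Q}(\zeta_t)^p$ (precisely where the standing assumption $p^2\mid k$ enters) and the compatibility $\zeta_t=\zeta_k^{\,p}$, both of which the paper leaves implicit.
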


\begin{proof}
 If $t = k/p$, the field $\Q[\zeta_k]$ is a $p$-dimensional extension of $\Q[\zeta_t]$ and is isomorphic to $\Q[x]/ \langle f(x) \rangle$, where $f(x) = x^p - \zeta_t$.  Note that $f(x)$ is irreducible over $\Q[\zeta_t]$ by Lemma \ref{lem:Lang}.  The minimal polynomial of the matrix $M_{k,p}$ is also $f(x)$, and so $\Q[\zeta_k] \cong \Q[M_{k,p}]$.
\end{proof}

Next, we will lift this embedding of fields to an embedding of matrix algebras.

\begin{lem}
\label{lem:psihom}
 Let $\psi$ be a function defined entrywise on a matrix in $\calM_n(\Q[\zeta_k])$ by
   \[A^{\psi} := \left(a_{i,j}^{\psi_{k,p}} \right),\]
   so that $A^{\psi} \in \calM_{np}(\Q[\zeta_t])$. Then $\psi$ is a homomorphism of matrix algebras.
\end{lem}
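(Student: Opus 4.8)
The plan is to reduce the claim to the fact, already established in Lemma \ref{lem:psiisom}, that $\psi_{k,p} \colon \Q[\zeta_k] \to \Q[M_{k,p}]$ is a $\Q$-algebra isomorphism, and then invoke the general principle that any homomorphism of (unital, commutative) rings $\varphi \colon R \to S$ induces a homomorphism $\calM_n(R) \to \calM_n(S)$ by applying $\varphi$ entrywise. Concretely, I would first observe that $\psi$ is well-defined: since each entry $a_{i,j}$ lies in $\Q[\zeta_k]$, its image $a_{i,j}^{\psi_{k,p}}$ lies in $\Q[M_{k,p}] \subseteq \calM_p(\Q[\zeta_t])$, so replacing each entry of $A \in \calM_n(\Q[\zeta_k])$ by a $p \times p$ block over $\Q[\zeta_t]$ yields an $np \times np$ matrix $A^\psi \in \calM_{np}(\Q[\zeta_t])$, as claimed. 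I would also remark that $\psi$ is $\Q$-linear and sends the identity to the identity, both of which are immediate from the corresponding properties of $\psi_{k,p}$.

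The substantive point is multiplicativity: $(AB)^\psi = A^\psi B^\psi$. Writing out the $(i,j)$ entry of $AB$ as $\sum_{\ell} a_{i,\ell} b_{\ell,j}$ and applying $\psi_{k,p}$, we get
\[
 (AB)_{i,j}^{\psi_{k,p}} = \left(\sum_{\ell} a_{i,\ell} b_{\ell,j}\right)^{\psi_{k,p}} = \sum_{\ell} a_{i,\ell}^{\psi_{k,p}} \, b_{\ell,j}^{\psi_{k,p}},
\]
using that $\psi_{k,p}$ is additive and multiplicative. Interpreting $A^\psi$ and $B^\psi$ as block matrices with $p \times p$ blocks indexed by $(i,j)$, the right-hand side is exactly the $(i,j)$ block of the block product $A^\psi B^\psi$; the rearrangement is legitimate because all the blocks $a_{i,\ell}^{\psi_{k,p}}$ lie in the commutative algebra $\Q[M_{k,p}]$, so no ordering issues arise. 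This establishes $(AB)^\psi = A^\psi B^\psi$ and completes the proof.

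I do not anticipate a genuine obstacle here: the only thing to be careful about is the bookkeeping between the ``matrix over a ring'' picture and the ``block matrix'' picture, i.e. verifying that applying $\psi_{k,p}$ entrywise really does coincide with the block-matrix multiplication on $\calM_{np}(\Q[\zeta_t])$. This is routine once one notes that $\Q[M_{k,p}]$ is a commutative subalgebra of $\calM_p(\Q[\zeta_t])$, so that block multiplication behaves exactly like ordinary matrix multiplication with entries in a commutative ring. One could alternatively phrase the whole argument as: the functor $\calM_n(-)$ applied to the ring homomorphism $\psi_{k,p}$ yields the desired homomorphism, which is a standard fact; I would likely include the short entrywise computation above for completeness rather than appeal to this abstractly.
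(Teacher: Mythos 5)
Your argument for additivity and multiplicativity is correct and is essentially the paper's own proof: apply the ring homomorphism $\psi_{k,p}$ from Lemma \ref{lem:psiisom} entrywise and check that the block-matrix product agrees with the image of the product, the key observation being exactly the one you make, namely that each sum $\sum_{\ell} a_{i,\ell}^{\psi_{k,p}} b_{\ell,j}^{\psi_{k,p}}$ is the $(i,j)$ block of $A^\psi B^\psi$.

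However, the paper's proof of this lemma verifies one further property that you omit: compatibility with the conjugate transpose, $\left(A^*\right)^\psi = \left(A^\psi\right)^*$. This is not a cosmetic addition. It is used essentially in the proof of Theorem \ref{thm:intermediate}, in the step $H^\psi \cdot \left(H^\psi\right)^* = \left(H H^*\right)^\psi$, and it is \emph{not} a formal consequence of $\psi_{k,p}$ being a $\Q$-algebra homomorphism: an arbitrary matrix representation of $\Q[\zeta_k]$ (say, a non-unitary conjugate of $M_{k,p}$) would still give a ring homomorphism entrywise but would fail to intertwine complex conjugation with the adjoint. What makes it work here is that $M_{k,p}$ is a monomial matrix with unimodular entries, hence unitary, so that $M_{k,p}^* = M_{k,p}^{-1} = \psi_{k,p}(\zeta_k^{-1}) = \psi_{k,p}(\overline{\zeta_k})$; extending $\Q$-linearly gives $\overline{a}^{\psi_{k,p}} = \left(a^{\psi_{k,p}}\right)^*$ for all $a \in \Q[\zeta_k]$, and then the entrywise/block bookkeeping gives $\left(A^*\right)^\psi = \left(A^\psi\right)^*$. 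You should add this verification; without it the lemma as you have proved it does not support the Gram matrix computation in the main theorem.
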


\begin{proof}
 Let $A = (a_{i,j}), B = (b_{i,j}) \in \calM_n(\Q[\zeta_k])$.  It suffices to check that $\psi$ preserves matrix addition, the conjugate transpose operation $*$, and matrix multiplication.  Then, using Lemma \ref{lem:psiisom},
 \begin{align*}
  (A + B)^\psi &= \left((a_{i,j} + b_{i,j})^{\psi_{k,p}} \right) = \left(a_{i,j}^{\psi_{k,p}} + b_{i,j}^{\psi_{k,p}} \right) = \left(a_{i,j}^{\psi_{k,p}}\right) + \left(b_{i,j}^{\psi_{k,p}} \right) = A^\psi + B^\psi,\\
  \left(A^*\right)^\psi &= \left((a_{i,j})^*\right)^\psi = \left((\overline{a_{j,i}})\right)^\psi = \left(\overline{a_{j,i}}^{\psi_{k,p}} \right) =  \left(a_{i,j}^{\psi_{k,p}} \right)^* = \left(A^\psi\right)^*,\\
  \left(AB\right)^\psi &= \left( \sum_{\ell=1}^{n} a_{i,\ell}b_{\ell,j} \right)^\psi = \left(\left(\sum_{\ell=1}^{n} a_{i,\ell}b_{\ell,j}\right)^{\psi_{k,p}} \right)= \left(\sum_{\ell = 1}^n a_{i,\ell}^{\psi_{k,p}} b_{\ell,j}^{\psi_{k,p}} \right) = A^\psi B^\psi,
 \end{align*}
as desired.
\end{proof}
It follows from Lemma \ref{lem:psihom} that if $M, N \in \calM_n(\Q[\zeta_k])$ are such that $MN^{\ast} = I_{n}$ then $M^{\psi}(N^{\ast})^{\psi} = (MN^{\ast})^{\psi} = I_{nk}$.

It is well-known that $\BH(p,p)$ is always nonempty. It contains, for example, the Discrete Fourier Matrix of order $p$:
\[ F_{p} = \left( \zeta_p^{ij} \right)_{0\leq i,j \leq p-1}\]
where $\zeta_p$ denotes a primitive $p^{\text{th}}$ root of unity. In general, character tables of abelian groups give rise to Butson-Hadamard matrices, and the Discrete Fourier Matrix is such an example for a cyclic group of order $p$.

\begin{defi}
\label{def:psi}
If $\psi$ is the entrywise application of $\psi_{k,p}$, $H \in \BH(n,k)$, and $C \in \BH(p,p)$, then we define
\[ \calH(H,C, \psi) := H^\psi \cdot (I_n \otimes C),\]
where $I_n \otimes C$ is the Kronecker product.
\end{defi}

Note that by definition the matrix $I_n \otimes C$ will be a block diagonal matrix with each block equal to $C$.  We will now prove Theorem \ref{thm:intermediate}, which generalises \cite[Theorem 4]{OstergardButson}.

\begin{proof}[Proof of Theorem \ref{thm:intermediate}]
Let $k$ be a positive integer such that $p^2 \mid k$ for some prime $p$ and let $t = k/p$.  Let $\psi:= \psi_{k,p}$, $H \in \BH(n,k)$, and $C \in \BH(p,p)$.  We will show that $\calH(H,C,\psi) \in \BH(np, t)$. Observe first that $\calH(H, C, \psi)$ is a block matrix, and by definition every block is of the form $M_{k,p}^i C$ for some $i$.  Since $M_{k,p}$ is a monomial matrix whose nonzero entries are $t^\textrm{th}$ roots of unity and all the entries of $C$ are $p^\textrm{th}$ roots of unity, the entries of $\calH(H,C, \psi)$ are all products of $t^\textrm{th}$ and $p^\textrm{th}$ roots of unity.  Since by assumption $p^2 \mid k$, we have $\zeta_p \in \Q[\zeta_t]$, and hence every entry of $\calH(H,C, \psi)$ is a $t^\textrm{th}$ root of unity. It now suffices to check that the Gram matrix has the required form. Using Lemma \ref{lem:psihom}, we have
  \begin{align*}
      \calH(H,C,\psi) \cdot \calH(H,C, \psi)^* &= H^\psi \cdot (I_n \otimes C) \cdot \left( H^\psi \cdot (I_n \otimes C) \right)^* \\
  &= H^\psi \cdot (I_n \otimes C) \cdot (I_n \otimes C)^* \cdot \left( H^\psi \right)^* \\
  &= H^\psi \cdot (I_n \otimes C) \cdot (I_n \otimes C^*) \cdot \left( H^\psi \right)^* \\
  &= H^\psi \cdot (I_n \otimes CC^*) \cdot \left( H^\psi \right)^*\\
  &= H^\psi \cdot (I_n \otimes p\cdot I_p) \cdot \left(H^\psi \right)^*\\
  &= p \cdot H^\psi \cdot \left(H^\psi \right)^* \\
  &= p \cdot \left(H \cdot H^* \right)^\psi \\
  &= p \cdot \left( n\cdot I_n\right)^\psi \\
  &= np \cdot I_{np}.
  \end{align*}
The result follows.
\end{proof}

We can now prove Theorem \ref{thm:main}.

\begin{proof}[Proof of Theorem \ref{thm:main}]
Let $k = mt$, and write $m = p_{1}p_{2} \ldots p_{d}$ as a product of (not necessarily distinct) primes. By hypothesis, $\BH(n,k)$ is nonempty. Applying Theorem \ref{thm:intermediate} repeatedly, we obtain matrices in the sets $\BH(n \prod_{i=1}^{c} p_{i}, t \prod_{i=c+1}^{d} p_{i} )$ for $c = 1, 2, \ldots, d$.
\end{proof}

Theorems \ref{thm:intermediate} and \ref{thm:main} do not allow us to eliminate a prime divisor from $k$. If one constructs a matrix $M_{k,p}$, where $p$ divides $k$ and $p^2$ does not, then, since $p$ is coprime to $k/p$, the map $\zeta \mapsto \zeta^p$ permutes the primitive $(k/p)^{\textrm{th}}$ roots of unity, so there exists $\zeta'$ such that $(\zeta')^{p} - \zeta_{k/p} = 0$, and the minimal polynomial of $M_{k,p}$ has a linear factor. Then we do not obtain a homomorphism in Lemma \ref{lem:psiisom}, and none of the remaining proof follows. It is instructive to consider the problem of constructing homomorphisms $\psi: \mathbb{Q}[\zeta_{p}] \rightarrow \mathbb{Q}$ (equivalent to constructing morphisms from $\BH(n, p)$ onto real Hadamard matrices); as of writing, the only examples known have $k = 2^{\alpha}$ for some $\alpha$. This is because the only irreducible polynomials over $\mathbb{Q}$ with precisely two non-zero terms (and constant term $\pm 1$) are of the form $x^{2^{\alpha}} +1$.

Parity obstructions also arise: suppose that we had a monomial matrix $P$ with minimal polynomial $\frac{x^{p}-1}{x-1}$, then we could have
\[ \sum_{i = 0}^{p-1} P^i \cdot C = \left( \sum_{i = 0}^{p-1} P^i \right) \cdot C = 0.\]
But each $P^{i}C$ is a Hadamard matrix, and the number of terms in the sum is odd, so no entry in the matrix can vanish. One could perhaps circumvent the restriction of binomial minimal polynomials by considering arbitrary matrices $K_{1}, \ldots, K_{t}$ and $L_{1}, \ldots, L_{t}$ such that $K_{i}L_{j}^{\ast}$ is complex Hadamard for each pair $1 \leq i,j\leq t$; such constructions have been considered by McNulty and Weigert \cite{McNulty} for example. We leave this as a direction for future research.\\

\noindent\textsc{Acknowledgements.}  The authors would like to thank the anonymous referees for many useful suggestions that helped improve this paper.

\bibliographystyle{plain}
\bibliography{Biblio2018}

\end{document}